\newtheorem{theorem}{Theorem}
\newcommand{\rr}{\mathbb{R}}
\newcommand{\jump}[1]{{[\![#1]\!]}}
\newcommand{\enorm}[1]{|\!|\!|#1|\!|\!|}
\title{Higher order unfitted FEM for Stokes interface problems}
\author{
Philip Lederer\thanks{Institute for Analysis and Scientific Computing, TU Wien,
 Wiedner Hauptstr. 8-10, 1040 Wien, Austria}
\and
Carl-Martin Pfeiler\footnotemark[1] 
\and
Christoph Wintersteiger\footnotemark[1] 
\and
Christoph Lehrenfeld\thanks{Institut f\"ur Numerische und Angewandte Mathematik, WWU M\"unster, Einsteinstr. 62, 48149 M\"unster, Germany, email: \texttt{christoph.lehrenfeld@gmail.com}}
}
\begin{document}
\maketitle                   


\begin{abstract}
We consider the discretization of a stationary Stokes interface problem in a velocity-pressure formulation. 
The interface is described implicitly as the zero level of a scalar function as it is common in level set based methods. Hence, the interface is not aligned with the mesh.
An unfitted finite element discretization based on a Taylor-Hood velocity-pressure pair and an XFEM (or CutFEM) modification is used for the approximation of the solution. This allows for the accurate approximation of solutions which have strong or weak discontinuities across interfaces which are not aligned with the mesh. To arrive at a consistent, stable and accurate formulation we require several additional techniques. 
First, a Nitsche-type formulation is used to implement interface conditions in a weak sense. 
Secondly, we use the ghost penalty stabilization to obtain an inf-sup stable variational formulation. 
Finally, for the highly accurate approximation of the implicitly described geometry, we use a combination of a piecewise linear interface reconstruction and a parametric mapping of the underlying mesh. 
We introduce the method and discuss results of numerical examples.
\end{abstract}

\section{Introduction}
We consider the two-phase Stokes problem on the open domain $\Omega \subset \rr^d,~d=2,3$ with two disjoint subdomains $\Omega_1$, $\Omega_2$ with $\Omega_1\cap\Omega_2=\emptyset$, $\overline{\Omega}_1 \cap \overline{\Omega}_2\! =\! \Gamma$, $\Omega_1 \cup \Omega_2 \cup \Gamma \!=\! \Omega$. We assume that one phase is completely surrounded by the other, i.e. $\partial \Omega \cap \Gamma \!=\! \emptyset$.
\begin{subequations}\label{eq:modprob}
\begin{eqnarray} 
  - \mathrm{div}(\mu_i D(\mathbf{u})) + \nabla p & = \rho_i \mathbf{g} & \text{ and } ~
 \mathrm{div}(\mathbf{u}) = 0 \quad \text{ in } \Omega_i, i=1,2, \label{eq:inner} \\
 \jump{\mathbf{u}} & = 0 & \text{ and } ~
\jump{\sigma(\mathbf{u},p) \cdot \mathbf{n}} = \mathbf{f} ~ \text{ on } \Gamma ~\text{ and } \mathbf{u} = \mathbf{u}_D \text{ on } \partial \Omega. \label{eq:interface}
\end{eqnarray}
\end{subequations}
Here, $\rho$ is the domainwise constant density, $\mu$ the domainwise constant viscosity, $\mathbf{g} \in [L^2(\Omega)]^d$ the gravitational force and $\mathbf{f} \in [L^2(\Gamma)]^d$ the surface tension force. $\jump{\cdot}$ is the usual jump operator across the interface, $\jump{v} := v|_{\Omega_1} - v|_{\Omega_2}$, $D(\mathbf{u})$ denotes the symmetric gradient $D(\mathbf{u}) := \nabla \mathbf{u} + \nabla \mathbf{u}^\top$ and $\sigma(\mathbf{u},p) = -\mu D(\mathbf{u}) + p \ \mathbf{I}$ is the stress tensor. 
We assume that the solution has the regularity $\mathbf{u} \in [H^1(\Omega)]^d \cap [H^3(\Omega_1\cup\Omega_2)]^d$ and $p \in L^2_0(\Omega) \cap H^2(\Omega_1\cup\Omega_2)$ with $L^2_0(\Omega) = \{ v \in L^2(\Omega) | \int_{\Omega} v \, dx = 0\}$. The interface is described only implicitly as the zero level of a (sufficiently smooth) scalar function, i.e. $\Gamma = \{ \phi = 0 \}$, but the computational mesh is not aligned to $\Gamma$, i.e. we consider a discretization in an ``unfitted'' setting. 

For the discretization, different challenges arise due to weak (velocity) and strong (pressure) discontinuities across $\Gamma$ and the approximation of the implicitly described geometries.
The major result of this contribution is the presentation of a new unfitted finite element method for the Stokes interface problem with order-optimal error bounds.
The method is presented in section \ref{sec:disc} and consists of a combination of enriched approximation spaces close to the interface (sec. \ref{sec:velpre}), Nitsche's method to implement the interface conditions in a weak sense (sec. \ref{sec:nitsche}), a ghost penalty stabilization to ensure inf-sup stability 
(sec. \ref{sec:ghostpen}) and a proper approach for numerical integration on level set domains
(sec. \ref{sec:geomapprox}). In section \ref{sec:numex} numerical examples are shown and discussed.

\section{Discretization spaces and variational formulation}\label{sec:disc}
\subsection{Choice of the velocity-pressure pair and the basic variational formulation}\label{sec:velpre}
Let $\mathcal{T}_h$ be a simplex triangulation of the domain $\Omega$ which is not necessarily aligned to $\Gamma$. 
As a starting point for the discretization we consider the famous Taylor-Hood velocity pressure space $\mathbf{V}_h \times Q_h$ which is known to be LBB-stable with
\begin{align*}
 \mathbf{V}_h & := \{ \mathbf{v} \in [C(\Omega)]^d |~ \mathbf{v}|_T \in [\mathcal{P}^2(T)]^d, T \in \mathcal{T}_h \},\\
 Q_h & := \{ v \in C(\Omega) |~ v|_T \in \mathcal{P}^1(T), T \in \mathcal{T}_h \},
\end{align*}
where $\mathcal{P}^k(T)$ is the space of polynomials up to degree $k\in\{1,2\}$ on $T \in \mathcal{T}_h$. 
Due to the fact that the velocity can have weak discontinuities (kinks) and the pressure can have discontinuities (jumps) across the unfitted interface, this velocity-pressure pair offers only a very poor approximation quality to the solution $(\mathbf{u},p)$ of \eqref{eq:modprob}. There hold the sharp (w.r.t. $h$) estimates
\begin{subequations}
\begin{equation} \label{eq:suboptest}
\begin{split}\inf_{\mathbf{v}_h \in \mathbf{V}_h} \Vert \mathbf{v}_h - \mathbf{u} \Vert_{H^1(\Omega_1\cup\Omega_2)} & \lesssim h^{\frac12} \Vert \mathbf{u} \Vert_{H^2(\Omega_1\cup\Omega_2)}, \\
\inf_{q_h \in Q_h} \Vert q_h - p \Vert_{L^2(\Omega)} & \lesssim h^{\frac12} \Vert p \Vert_{H^1(\Omega_1\cup\Omega_2)}.
\end{split}
\end{equation}
To deal with unfitted discontinuities standard finite element spaces are adjusted in the unfitted finite element method which is also known under the names CutFEM \cite{burman2014cutfem} or XFEM \cite{fries2010extended} in the literature. We use the finite element spaces
$\mathbf{V}_h^\Gamma := \mathbf{V}_h|_{\Omega_1} \oplus \mathbf{V}_h|_{\Omega_2}$
and 
$Q_h^\Gamma := Q_h|_{\Omega_1} \oplus Q_h|_{\Omega_2}$
as they are also considered in (among others) \cite{hansbo2002unfitted,grossreusken07,reusken08}. 
This gives rise to the estimates
\begin{equation}\label{eq:secorderbound}
\begin{split}
\inf_{\mathbf{v}_h \in \mathbf{V}_h^\Gamma} \Vert \mathbf{v}_h - \mathbf{u} \Vert_{H^1(\Omega_1\cup\Omega_2)} & \lesssim h^{2} \Vert \mathbf{u} \Vert_{H^3(\Omega_1\cup\Omega_2)}, \\
\inf_{q_h \in Q_h^\Gamma} \Vert q_h - p \Vert_{L^2(\Omega)} & \lesssim h^{2} \Vert p \Vert_{H^2(\Omega_1\cup\Omega_2)}.
\end{split}
\end{equation}
\end{subequations}
The resulting velocity-pressure pair $\mathbf{V}_h^\Gamma \times Q_h^\Gamma$ is suitable to approximate solutions with (strong and weak) discontinuities across the interface, but it is nonconforming in the velocities, $\mathbf{V}_h^\Gamma \not\subset [H^1(\Omega)]^d$.
Further, we note that the LBB-stability of the underlying velocity-pressure pair $\mathbf{V}_h \times Q_h$
is not inherited by $\mathbf{V}_h^\Gamma \times Q_h^\Gamma$.\\[1ex]

With bilinear forms $N(\cdot,\cdot)$ and $J(\cdot,\cdot)$ to be introduced in subsections \ref{sec:nitsche} and \ref{sec:ghostpen}, which are responsible for dealing with the nonconformity of $\mathbf{V}_h^\Gamma$, the interface conditions \eqref{eq:interface} and the issue of stability, we formulate the discrete problem as follows:
Find $(\mathbf{u},p) \in \mathbf{V}_h^\Gamma \times Q_h^\Gamma$ such that with $a(\mathbf{u},\mathbf{v}) :=  \frac12 \sum_{i=1,2} \mu_i (D(\mathbf{u}),D(\mathbf{v}))_{\Omega_i},~\mathbf{u},\mathbf{v} \in \mathbf{V}_h^\Gamma,$ there holds
\begin{align*} 
a(\mathbf{u},\mathbf{v})
& - \sum_{i=1,2} \! (\mathrm{div}(\mathbf{v}),p)_{\Omega_i} 
- \sum_{i=1,2} \! (\mathrm{div}(\mathbf{u}),q)_{\Omega_i}
\!+ N((\mathbf{u},p),(\mathbf{v},q))
\!- J(p,q) \\
& = \!\sum_{i=1,2} \rho_i (\mathbf{g},\mathbf{v})_{\Omega_i} + f(\mathbf{v}),
\end{align*}
for all $(\mathbf{v},q) \in \mathbf{V}_h^\Gamma \times Q_h^\Gamma$. Here $(\cdot,\cdot)_{S}$ denotes the usual $L^2$ scalar product over the domain $S\in\{\Omega_1,\Omega_2,\Gamma\}$. The integrals over $\Omega_i,~i=1,2$ ensure consistency with respect to \eqref{eq:inner}. Consistency with respect to \eqref{eq:interface} has to be implemented through a suitable choice of the bilinear form $N((\cdot,\cdot),(\cdot,\cdot))$ and the linear form $f(\cdot)$. The additional stabilization bilinear form $J(\cdot,\cdot)$ is further introduced to ensure inf-sup-stabilty. Both aspects are discussed below.

\subsection{Unfitted Nitsche discretization to impose interface conditions}\label{sec:nitsche}
To implement the interface conditions, continuity of the velocity and conservation of momentum through the interface (in a weak sense) we consider Nitsche's method. 
We do this analogously to the Nitsche-XFEM for a scalar problem in \cite{hansbo2002unfitted}.
\begin{align*}
N((\mathbf{u},p),(\mathbf{v},q)) & := 
(\{\!\!\{ \sigma(\mathbf{u},p) \cdot \mathbf{n} \}\!\!\}, [\![ \mathbf{v} ]\!])_{\Gamma}
+ (\{\!\!\{ \sigma(\mathbf{v},q) \cdot \mathbf{n} \}\!\!\}, [\![ \mathbf{u} ]\!])_{\Gamma}
+ (\frac{\lambda}{h} \{\!\!\{ \mu \}\!\!\} [\![ \mathbf{u} ]\!], [\![ \mathbf{v} ]\!])_{\Gamma}, \\ f(\mathbf{v}) & := (\mathbf{f},\kappa_1 \mathbf{v}|_{\Omega_2} + \kappa_2 \mathbf{v}|_{\Omega_1})_{\Gamma}
\end{align*}
Here $\{\!\!\{v\}\!\!\} := \kappa_1 v|_{\Omega_1} + \kappa_2 v|_{\Omega_2}$, $\kappa_1+\kappa_2=1$, is a weighted average which plays an important role for the stability of the method. 
Together with $f(\cdot)$ the first term in $N(\cdot,\cdot)$ ensures consistency of the variational formulation and is derived by a reformulation of the terms stemming from partial integration. The second term is added for symmetry reasons, which is consistent due to $[\![ \mathbf{u} ]\!]=0$ on $\Gamma$ for the solution $\mathbf{u}$. The last term ensures coercivity of the viscosity operator for $\lambda$ sufficiently large and again vanishes for the solution $\mathbf{u}$. 
In view of the stability discussion, we define the bilinear forms
\begin{align*}
  A(\mathbf{u},\mathbf{v}) & := a(\mathbf{u},\mathbf{v}) + N((\mathbf{u},0),(\mathbf{v},0)), \\
  b(\mathbf{u},q) & := - {\textstyle \sum_{i=1,2} } (\mathrm{div}(\mathbf{u}),q)_{\Omega_i} + N((\mathbf{u},0),(0,q)), \\
k((\mathbf{u},p),(\mathbf{v},q)) & := A(\mathbf{u},\mathbf{v}) + b(\mathbf{u},q) 
+ b(\mathbf{v},p) - J(p,q),
\end{align*}
for $\mathbf{u},\mathbf{v} \in \mathbf{V}_h^\Gamma$ and $p,q \in Q_h^\Gamma$.
For the weighting $\kappa_1 = 0$ if $|T\cap\Omega_1|/|T| \leq \frac12$ and $\kappa_1 = 1$ otherwise the Nitsche formulation is known to be coercive, i.e. 
$\Vert \mathbf{u} \Vert_A := \sqrt{A(\mathbf{u},\mathbf{u})}, ~ \mathbf{u} \in \mathbf{V}_h^\Gamma$ defines a norm on $\mathbf{V}_h^\Gamma$,
cf. \cite[Lemma 5.1]{lehrenfeldreusken2016} and \cite[Lemma 3.5]{massjung12}. We note that for $\mathbf{u}, \mathbf{v} \in \mathbf{V}_h \subset [H^1(\Omega)]^d$ there holds $N((\mathbf{u},p),(\mathbf{v},q)) = 0$. Further for $\mathbf{u} \in \mathbf{V}_h^\Gamma$ and $q \in Q_h^\Gamma$ we have $k((\mathbf{u},p),(\mathbf{u},-p)) = A(\mathbf{u},\mathbf{u}) + J(p,p)$.

 \subsection{Inf-sup-stability and the ghost penalty stabilization}\label{sec:ghostpen}
One important aspect in the discretization of the Stokes problem is the design of LBB-stable velocity-pressure finite element spaces or the application of proper stabilization schemes.
In the context of unfitted finite element formulations this problem has been investigated in the literature for different velocity-pressure spaces: 

In \cite{zahedi} the space $\mathbf{V}_h^{\text{iso},\Gamma} \times Q_h^\Gamma$ is used with $\mathbf{V}_h^{\text{iso},\Gamma} = \mathbf{V}_h^{\text{iso}}|_{\Omega_1} + \mathbf{V}_h^{\text{iso}}|_{\Omega_2}$ where $\mathbf{V}_h^{\text{iso}}$ is the space of continuous piecewise linear functions on a once refined mesh, so that $\text{dim}(\mathbf{V}_h^{\text{iso}})=\text{dim}(\mathbf{V}_h)$. Inf-sup stability is shown for this velocity-pressure pair only with an additional stabilization term, the ``ghost penalty'' stabilization explained below. With this stabilization first order results for the $H^1$ norm error in the velocity are obtained.
In \cite{kirchhartetal2015} the ghost penalty stabilization has been used to prove inf-sup stability for the velocity-pressure pair $\mathbf{V}_h \times Q_h^\Gamma$.
In the recent paper \cite{Wang2015820} a stabilized equal-order space $[Q_h^\Gamma]^d \times Q_h^\Gamma$ has been combined with the ghost-penalty method to achieve a robust and first order (in the $H^1$ norm of the velocity) method.
We also mention the publication \cite{Cattaneo15} which considers (among others) the velocity space $\mathbf{V}_h^{\text{bub},\Gamma} = \mathbf{V}_h^{\text{bub}}|_{\Omega_1} \oplus \mathbf{V}_h^{\text{bub}}|_{\Omega_1}$ where 
$\mathbf{V}_h^{\text{bub}}$ is the space of continuous piecewise linear functions enriched with interior bubble functions.

In all these publications robust methods for Stokes interface problems have been derived using additional stabilizations, especially the ghost-penalty method. 
Additionally to provide inf-sup-stability independent of the interface position, these stabilizations add control on the conditioning of linear systems and thereby facilitate the treatment of arising linear systems with iterative methods.
Nevertheless, the above mentioned methods are -- in contrast to the method presented here -- not able to provide higher order accuracy. 
This is obvious as none of the above velocity spaces, $\mathbf{V}_h^{\text{iso},\Gamma}$, $\mathbf{V}_h$, $[Q_h^\Gamma]^d$ or $\mathbf{V}_h^{\text{bub},\Gamma}$ provide more than first order convergence (in the $H^1$ norm of the velocity) for velocities with weak discontinuities across the interface. Note that $\mathbf{V}_h^\Gamma$ provides these same higher order approximation error bounds, cf. \eqref{eq:secorderbound}.
In this sense the present contribution constitutes a step forward in the direction of higher order discretizations for Stokes interface problems on level set domains. 

To ensure stabilization we also apply the ghost penalty stabilization introduced in \cite{burman10,burman2012fictitious}.
Let (for $i = 1,2$)
\begin{equation*}
 \mathcal{F}_i^\Gamma := \{ F = T_a \cap T_b; \mathrm{meas}_{d-1} (F) > 0; T_a \neq T_b; T_a \cap \Omega_i \neq \emptyset, T_b  \cap \Omega_i \neq \emptyset ; T_a\text{ or } T_b \text{ are cut}\},
\end{equation*}
be the set of faces within the band of cut elements. On this set we add the stabilization bilinear form
\begin{equation*}
 J(p,q) :=  \gamma ~ {\textstyle \sum_{i=1,2} \sum_{F \in \mathcal{F}_i^\Gamma} } \mu_i^{-1} h_F^{3} (\jump{\partial_n \mathcal{E}_{i,h} p}, \jump{\partial_n \mathcal{E}_{i,h} q})_{F}, \quad p,q \in Q_h^\Gamma,
\end{equation*}
with $\mathcal{E}_{i,h}$ the canonical extension of discrete functions in $Q_h^\Gamma$ from $\Omega_i$ to $\Omega_i^+$ (the domain of all elements which have some part in $\Omega_i$), $h_F=\max\{h_{T_a},h_{T_b}\}$ where $F = T_a \cap T_b$ and $\gamma > 0$ the stabilization parameter. 
This additional bilinear form stabilizes the discrete formulation by penalizing discontinuities in the derivative across element faces which are close to the interface. For domainwise smooth solutions this stabilization is obviously consistent. For the discussion of inf-sup stability we introduce the following norm on $\mathbf{V}_h^\Gamma \times Q_h^\Gamma$:
$$
\enorm{(\mathbf{u},p)}^2 := A(\mathbf{u},\mathbf{u}) + { \textstyle \sum_{i=1,2} } \Vert \mu_i^{-\frac12} \mathcal{E}_{i,h} p\Vert_{L^2(\Omega_i^+)}^2 + J(p,p),\quad (\mathbf{u},p) \in \mathbf{V}_h^\Gamma \times Q_h^\Gamma. 
$$
With respect to this norm we are able to deduce an inf-sup result for the discretization with $\mathbf{V}_h^\Gamma \times Q_h^\Gamma$ and the ghost penalty stabilization. Key ingredient for this are the results obtained in \cite{kirchhartetal2015} for a discretization with $\mathbf{V}_h \times Q_h^\Gamma$.
\begin{theorem}
There exist $h_0, \lambda_0, \gamma_0, c_s>0$, such that for all $h < h_0, \lambda > \lambda_0, \gamma > \gamma_0$ there holds the inf-sup condition
\begin{equation*}
\sup_{(\mathbf{v},q) \in \mathbf{V}_h^\Gamma \times Q_h^\Gamma} \frac{k((\mathbf{u},p),(\mathbf{v},q))}{\enorm{(\mathbf{v},q)}} \geq c_s \enorm{(\mathbf{u},p)} \quad  \text{ for all } (\mathbf{u},p) \in \mathbf{V}_h^\Gamma \times Q_h^\Gamma.
\end{equation*}
In particular the constant $c_s>0$ does not depend on $h$ or the position of the interface $\Gamma$ relative to the mesh.
\end{theorem}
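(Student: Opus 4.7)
The plan is to derive the inf-sup condition by a standard two-step argument: use the coercivity of the diagonal blocks together with the Ne\v{c}as-style ``pressure test function'' trick, borrowing the key surjectivity result from \cite{kirchhartetal2015} for the simpler continuous velocity space $\mathbf{V}_h \subset \mathbf{V}_h^\Gamma$.

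First I would exploit the observation already made at the end of subsection \ref{sec:nitsche}: choosing the test pair $(\mathbf{v},q)=(\mathbf{u},-p)$ yields
\begin{equation*}
k((\mathbf{u},p),(\mathbf{u},-p)) = A(\mathbf{u},\mathbf{u}) + J(p,p) = \|\mathbf{u}\|_A^2 + J(p,p),
\end{equation*}
so that coercivity (for $\lambda>\lambda_0$, cf.\ \cite{lehrenfeldreusken2016,massjung12}) controls two of the three summands in $\enorm{(\mathbf{u},p)}^2$, namely the velocity $A$-norm and the ghost penalty term, but leaves the $\sum_i \|\mu_i^{-1/2}\mathcal{E}_{i,h}p\|_{L^2(\Omega_i^+)}^2$ contribution uncontrolled. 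The ghost penalty term $J(p,p)$ also already appears with the right sign because of the $-J(p,q)$ in $k$ and the sign flip in $q=-p$.

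Next I would use the inf-sup result from \cite{kirchhartetal2015}, which says that for $\gamma>\gamma_0$ and $h<h_0$ there exists, for every $p\in Q_h^\Gamma$, a velocity $\mathbf{w}_p\in \mathbf{V}_h$ such that
\begin{equation*}
-{\textstyle\sum_i}(\mathrm{div}(\mathbf{w}_p),p)_{\Omega_i} \; \geq\; c_1 {\textstyle\sum_i}\|\mu_i^{-1/2}\mathcal{E}_{i,h}p\|_{L^2(\Omega_i^+)}^2 - c_2\,J(p,p),\qquad \|\mathbf{w}_p\|_A^2 \lesssim {\textstyle\sum_i}\|\mu_i^{-1/2}\mathcal{E}_{i,h}p\|_{L^2(\Omega_i^+)}^2.
\end{equation*}
The crucial point is that $\mathbf{w}_p\in \mathbf{V}_h \subset \mathbf{V}_h^\Gamma$ is $H^1$-conforming, so $\jump{\mathbf{w}_p}=0$ on $\Gamma$, which gives $N((\mathbf{w}_p,0),(\cdot,\cdot))=0$ (from the first and third Nitsche terms) and also makes the Nitsche contribution to $b(\mathbf{w}_p,p)$ reduce to the second term $(\{\!\!\{\sigma(\mathbf{w}_p,0)\cdot\mathbf{n}\}\!\!\},\jump{\cdot})_\Gamma$, which does vanish when paired with $\mathbf{w}_p$ (no jump) but not when paired with $p$; a short computation using the weighted-average choice of $\kappa_i$ and a standard inverse/trace estimate on cut elements handles this consistency term and shows that $b(\mathbf{w}_p,p)$ coincides, up to a controllable perturbation, with the quantity estimated in \cite{kirchhartetal2015}.

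Finally I would test with the combination $(\mathbf{v},q) = (\mathbf{u}+\delta\mathbf{w}_p,-p)$ for a small parameter $\delta>0$ to be fixed. Expanding and using the bilinearity of $k$ gives
\begin{equation*}
k((\mathbf{u},p),(\mathbf{u}+\delta\mathbf{w}_p,-p)) = \|\mathbf{u}\|_A^2 + J(p,p) + \delta A(\mathbf{u},\mathbf{w}_p) + \delta\,b(\mathbf{w}_p,p),
\end{equation*}
since $b(\mathbf{u},-p)+b(\mathbf{u},p)=0$. The cross term $\delta A(\mathbf{u},\mathbf{w}_p)$ is absorbed by Cauchy--Schwarz and Young's inequality into $\tfrac12\|\mathbf{u}\|_A^2 + \tfrac{\delta^2}{2}\|\mathbf{w}_p\|_A^2$; the second piece is dominated by $\delta^2 \sum_i\|\mu_i^{-1/2}\mathcal{E}_{i,h}p\|^2$ thanks to the bound on $\|\mathbf{w}_p\|_A$. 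Inserting the lower bound for $b(\mathbf{w}_p,p)$ and choosing $\delta$ small enough so that $\delta c_2 < 1/2$ and $\delta\cdot C/2 < c_1/2$ yields $k(\cdot,\cdot) \gtrsim \enorm{(\mathbf{u},p)}^2$. The norm of the test pair satisfies $\enorm{(\mathbf{u}+\delta\mathbf{w}_p,-p)}\lesssim \enorm{(\mathbf{u},p)}$ by the triangle inequality and again the estimate on $\|\mathbf{w}_p\|_A$, completing the proof with some explicit $c_s>0$ independent of $h$ and of the cut configuration.

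The main obstacle I expect is the \emph{transfer} step: rigorously passing from the $\mathbf{V}_h \times Q_h^\Gamma$ inf-sup of \cite{kirchhartetal2015} to a statement about $b(\cdot,\cdot)$ as defined here, which carries the extra Nitsche consistency term. Handling this requires a careful trace/inverse estimate on cut elements bounding $\|\{\!\!\{\sigma(\mathbf{w}_p,0)\cdot\mathbf{n}\}\!\!\}\|_{L^2(\Gamma)}$ by $\|\mathbf{w}_p\|_A$, combined with the fact that $\jump{p}$ on $\Gamma$ is controlled via $\mathcal{E}_{i,h}p$ and the ghost penalty $J(p,p)$; everything else is bookkeeping.
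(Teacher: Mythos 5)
Your proposal follows the paper's proof essentially verbatim: the same key result \cite[Theorem 5.3]{kirchhartetal2015} supplies $\mathbf{w}_p\in\mathbf{V}_h$, and the same test pair $(\mathbf{u}+\delta\mathbf{w}_p,-p)$ with Cauchy--Schwarz/Young absorption and a small enough $\delta$ gives the bound. The only real difference is that the ``transfer step'' you flag as the main obstacle is vacuous: in $N((\mathbf{w}_p,0),(0,p))$ the velocity test slot is $\mathbf{0}$ and $\jump{\mathbf{w}_p}=0$, so all three Nitsche terms vanish identically and $b(\mathbf{w}_p,p)=-\sum_{i}(\mathrm{div}\,\mathbf{w}_p,p)_{\Omega_i}$ is exactly the quantity estimated in \cite{kirchhartetal2015}, with no extra consistency term to control.
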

\begin{proof}[Sketch of the proof]
We fix $(\mathbf{u},p) \in \mathbf{V}_h^\Gamma \times Q_h^\Gamma$. 
The most important ingredient in the proof is \cite[Theorem 5.3]{kirchhartetal2015} which states that for given $p \in Q_h^\Gamma$ there exists a $\mathbf{w} \in \mathbf{V}_h$ such that for constants $c_1,c_2 >0$ independent of $h$ and $\Gamma$ there holds
$$
b(\mathbf{w},p) \geq c_1 
 { \textstyle \sum_{i=1,2} } \Vert \mu_i^{-\frac12} \mathcal{E}_{i,h} p\Vert_{L^2(\Omega_i^+)}^2 \!\! - c_2 J(p,p)\text{ and }
\Vert \mathbf{w} \Vert_A^2 = { \textstyle \sum_{i=1,2} } \Vert \mu_i^{-\frac12} \mathcal{E}_{i,h} p\Vert_{L^2(\Omega_i^+)}^2. 
$$
As $\mathbf{V}_h$ is a subspace of $\mathbf{V}_h^\Gamma$ this function $\mathbf{w}$ allows to control the pressure as in \cite{kirchhartetal2015}.
Analogously to the proof of \cite[Theorem 5.4]{kirchhartetal2015} we can take $(\mathbf{v},q)=(\mathbf{u}+\alpha\mathbf{w},-p)$ with a suitable choice for $\alpha$ to 
obtain
$
k((\mathbf{u},p),(\mathbf{v},q)) \geq c \enorm{(\mathbf{u},p)}
$ with constants $\alpha, c > 0$ which are independent of $h$ and $\Gamma$.
Combining this with $\enorm{(\mathbf{v},q)} \leq c^\ast(\alpha)\, \enorm{(\mathbf{u},p)}$ gives the result.
\end{proof}
Using standard techniques from the error analysis of non-conforming finite element methods optimal order a priori error bounds follow from this inf-sup result. 
Until now we assumed that numerical integration can be carried out exactly. In practice however one has to deal with approximations to the domains $\Omega_i,~i=1,2$ and the interface $\Gamma$. In order not to lose optimal order convergence we apply a new approach for the geometry approximation. This is discussed next.

\subsection{High order geometry approximation}\label{sec:geomapprox}
One major issue in the design and realization of high order unfitted finite element methods is the problem of numerical integration on domains which are only implicitly described by a level set function $\phi$. Integrals of the form $\int_S f \, dx$ have to be computed for $S\in \{\Omega_1,\Omega_2,\Gamma\}$, with $\Gamma=\{\phi=0\}$ and $\Omega_i := \{\phi \gtrless 0\}$. A standard technique is based on a linear interpolation $I_h\phi$ of $\phi$ which 
results in explicit and (only) second order accurate reconstructions $\Gamma^{\text{lin}}$ and $\Omega_i^{\text{lin}},~i=1,2$. 

\begin{figure}[h!]
  \vspace*{-0.2cm}
  \small
  \begin{center}
    \begin{tabular}{rc@{}c@{}c@{}c@{}c}
      &
        \begin{minipage}{0.25\textwidth}
          \includegraphics[width=0.925\textwidth]{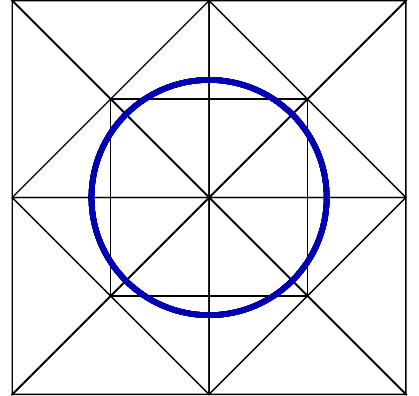} 
        \end{minipage}
      &
        \begin{minipage}{0.025\textwidth}
          \hspace*{-0.25cm} \Large +
        \end{minipage}
      &
        \begin{minipage}{0.25\textwidth}
          \includegraphics[width=0.925\textwidth]{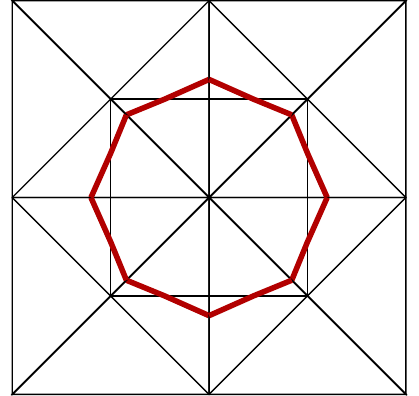} 
        \end{minipage}
      &
        \begin{minipage}{0.05\textwidth}
          $
          \displaystyle
          \stackrel{\displaystyle\mathbf{\Psi}_h}{\longrightarrow}
          $
        \end{minipage}
      &
        \begin{minipage}{0.25\textwidth}
          \includegraphics[width=0.925\textwidth]{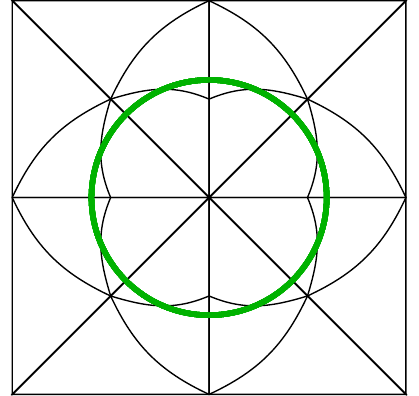} 
        \end{minipage}
        \\
      interface: \hspace*{-1cm} & $ \{\phi=0 \}$ && $ \Gamma^{\text{lin}} = \{I_h \phi = 0\}$ && $\mathbf{\Psi}_h( \{ I_h \phi = 0\} )$ \\
      mesh: \hspace*{-1cm} & $\mathcal{T}_h$ && $\mathcal{T}_h$ && $\mathbf{\Psi}_h(\mathcal{T}_h)$ \\
      accuracy: \hspace*{-1cm} & \underline{$\mathcal{O}(h^{k+1})$} && $\mathcal{O}(h^{2})$ && \underline{$\mathcal{O}(h^{k+1})$} \\
      representation: \hspace*{-1cm} & implicit && \underline{explicit} && \underline{explicit}
    \end{tabular}
  \end{center}
  \caption{Main idea of the method in \cite{lehrenfeld2016cmame}: The geometry description with the level set function $\phi$ is highly accurate but implicit (left). The zero level  $\Gamma^{\text{lin}}$ of the piecewise linear interpolation $I_h \phi$ has an  explicit representation but is only second order accurate (center). $\Gamma^{\text{lin}}$ is mapped towards the interface $\{\phi=0\}$ applying the mesh transformation $\mathbf{\Psi}_h$ resulting in a highly accurate and explicit representation (right).}
  \label{fig:idea} 
\end{figure}

In \cite{lehrenfeld2016cmame} a novel approach has been proposed to improve this by applying a parametric mapping $\mathbf{\Psi}_h: \Omega \rightarrow \Omega$, $\mathbf{\Psi}_h \in \mathbf{V}_h$ of the \emph{underlying mesh} such that $\phi \circ \mathbf{\Psi}_h \approx I_h \phi$. The representation of the resulting geometry is still explicit and thus allows for the application of fairly simple quadrature rules. We refer to Figure \ref{fig:idea} for a sketch, and to \cite{lehrenfeld2016cmame} for details on the construction of the mapping $\mathbf{\Psi}_h$.
To make use of this higher order geometry approximation, $\mathbf{\Psi}_h$ has to be considered also in the discretization rendering the resulting methods \emph{isoparametric} unfitted FE methods.
In the discretization above we have to replace $\Gamma$ with $\Gamma_h \!= \mathbf{\Psi}_h(\Gamma^{\text{lin}})$, $\Omega_i$ with $\Omega_{i,h} \!= \mathbf{\Psi}_h(\Omega_i^{\text{lin}})$, replace $\mathbf{V}_h^\Gamma$ with $\mathcal{V}_h^\Gamma := \{ v \circ \mathbf{\Psi}_h^{-1}\mid v \in \mathbf{V}_h^\Gamma\} = \{ v \mid v \circ \mathbf{\Psi}_h \in \mathbf{V}_h^\Gamma\}$ and $Q_h^\Gamma$ with $\mathcal{Q}_h^\Gamma := \{ q \mid q \circ \mathbf{\Psi}_h \in {Q}_h^\Gamma\}$.
In \cite{lehrenfeldreusken2016} rigorous high order error bounds have been derived for the discretization error (including the consideration of geometry errors) of an unfitted finite element discretization for a scalar unfitted interface problem. 

\section{Numerical example} \label{sec:numex}
We consider a numerical example from the literature, cf. \cite{kirchhartetal2015}, with the domain $\Omega=[-1,1]^2$ and an interface $\Gamma:=\{\mathbf{x}\in \Omega : \phi(\mathbf{x}) := \Vert \mathbf{x} \Vert_2 - r_\Gamma = 0\}$ where $r_\Gamma=2/3$. On this domain we solve the Stokes interface problem with $(\mu_1, \mu_2) = (1,10)$ and $\mathbf{f}_\Gamma = 1/2 \cdot \mathbf{n}_\Gamma$. The boundary data $\mathbf{u}_D$ and the force $\mathbf{g}$ are set such that the solution is:
\begin{align*}
\hspace*{-0.8cm}
  \mathbf{u}(\mathbf{x}) &= e^{-\Vert \mathbf{x} \Vert_2^2} (-x_2,x_1)^\top
                             \left\{ \begin{array}{l@{,\ }c} 
                                       \mu_1^{-1} & \Vert \mathbf{x} \Vert_2 \le r_\Gamma, \\
                                       \mu_2^{-1}\! + (\mu_1^{-1}\!\! -\! \mu_2^{-1})
                                       e^{\Vert \mathbf{x} \Vert_2^2 - r_\Gamma^2} & \Vert \mathbf{x} \Vert_2 > r_\Gamma,
                                     \end{array} \right. \\  
  p(\mathbf{x}) &= -\frac{\pi}{18} + \left\{ \begin{array}{l@{,\ }c} 
                              x_1^3 + 1/2 & \Vert \mathbf{x} \Vert_2 < r_\Gamma, \\
                              x_1^3 & \Vert \mathbf{x} \Vert_2 > r_\Gamma.
                            \end{array} \right.
\end{align*}
Note that $\mathbf{u}\cdot \mathbf{n}_\Gamma = 0$ on $\Gamma$, but the velocity has kinks and the pressure has jumps across the interface.

Starting from a shape regular unstructured mesh (230 triangles) which is not fitted to the interface we consecutively refine the mesh 6 times resulting in 7 levels $L \in \{0,..,6\}$. On each mesh we applied three discretizations where we switch between applying and not applying the isoparametric mapping $\mathbf{\Psi}_h$ and between the velocity spaces $\mathbf{V}_h$ ($\mathcal{V}_h$) and $\mathbf{V}_h^\Gamma$ ($\mathcal{V}_h^\Gamma$). In all cases we use the ghost penalty stabilization with $\gamma = 0.1$ and the Nitsche parameter $\lambda = 20$.
The computations were carried out with the add-on package \texttt{ngsxfem} to the finite element library \texttt{NGSolve} \cite{schoeberl2014cpp11}. Direct solvers have been used to solve the arising linear systems. 

Let $(\mathbf{u}_h,p_h)$ be the discrete solution of the previously discussed discretizations. 
In the Tables \ref{tab:results1}-\ref{tab:results3} the error measures
$ e_{(\mathbf{u},p)} = \Vert p - p_h \Vert_{L^2(\Omega)} + \Vert \mathbf{u} - \mathbf{u}_h \Vert_{H^1(\Omega_1^\ast \cup \Omega_2^\ast)}$ 
and
$ e_{\mathbf{u},L^2} = \Vert \mathbf{u} - \mathbf{u}_h \Vert_{L^2(\Omega)}$
and corresponding experimental orders of convergence (eoc) are depicted. 
Here, the domains $\Omega_i^\ast,~i=1,2$ are, depending on the application of the mesh transformation $\mathbf{\Psi}_h$, either $\Omega_i^\ast = \Omega_i^{\text{lin}}$ or $\Omega_i^\ast = \Omega_{i,h} = \mathbf{\Psi}_h(\Omega_i^{\text{lin}}),~i=1,2$. 

We observe that the velocity enrichment is crucial to obtain good results. This is not surprising considering the sharp estimates in \eqref{eq:suboptest}. Applying the velocity enrichment without the isoparametric mapping still gives suboptimal results. This is due to the insufficient accuracy with respect to the geometry. The combination of both, the velocity enrichment and the isoparametric mapping, resolves this problem and optimal order convergence can be observed in both measures. 
\begin{table}[h!]
\begin{center}
\begin{tabular}{c
  c@{\hspace*{0.2cm}(}c@{)\hspace*{0.3cm}}
  c@{\hspace*{0.2cm}(}c@{)\hspace*{0.1cm}} 
}
\toprule
& \multicolumn{4}{c}{$\mathcal{V}_h \times \mathcal{Q}_h^\Gamma$} \\
\midrule
$L$ 
& $e_{(\mathbf{u},p)}$ & eoc & $e_{\mathbf{u},L^2}$ & eoc \\
\midrule
0 & \num{1.764192e-01} &  --- & \num{1.076343e-02} &  --- \\
1 & \num{1.355452e-01} &  0.4 & \num{5.457021e-03} &  1.0 \\
2 & \num{1.033889e-01} &  0.4 & \num{3.054609e-03} &  0.8 \\
3 & \num{7.024931e-02} &  0.6 & \num{1.607201e-03} &  0.9 \\
4 & \num{4.844310e-02} &  0.5 & \num{8.559501e-04} &  0.9 \\
5 & \num{3.350455e-02} &  0.5 & \num{3.973392e-04} &  1.1 \\
6 & \num{2.350496e-02} &  0.5 & \num{2.037749e-04} &  1.0 \\
\bottomrule
\end{tabular}
\end{center}
\caption{Convergence history for discretizations without velocity enrichment, but parametric transformation.}
\label{tab:results1}
\end{table}
\begin{table}[h!]
\begin{center}
\begin{tabular}{c
  c@{\hspace*{0.2cm}(}c@{)\hspace*{0.3cm}}
  c@{\hspace*{0.2cm}(}c@{)\hspace*{0.1cm}} 
}
\toprule
& \multicolumn{4}{c}{$\mathbf{V}_h^\Gamma \times \mathbf{Q}_h^\Gamma$} \\
\midrule
$L$ 
& $e_{(\mathbf{u},p)}$ & eoc & $e_{\mathbf{u},L^2}$ & eoc 
\\
\midrule
0 & \num{3.687400e-02} & --- & \num{4.835893e-04} & --- \\
1 & \num{1.415700e-02} & 1.4 & \num{1.792713e-04} & 1.4 \\
2 & \num{4.303471e-03} & 1.7 & \num{2.908379e-05} & 2.6 \\
3 & \num{1.313515e-03} & 1.7 & \num{4.188722e-06} & 2.8 \\
4 & \num{4.270432e-04} & 1.6 & \num{8.370972e-07} & 2.3 \\
5 & \num{1.498958e-04} & 1.5 & \num{1.597825e-07} & 2.4 \\
6 & \num{5.099163e-05} & 1.6 & \num{2.441986e-08} & 2.7 \\
\bottomrule
\end{tabular}
\end{center}
\caption{Convergence history for discretizations with velocity enrichment, but no parametric mapping.}
\label{tab:results2}
\end{table}

\begin{table}[h!]
\begin{center}
\begin{tabular}{c
  c@{\hspace*{0.2cm}(}c@{)\hspace*{0.3cm}}
  c@{\hspace*{0.2cm}(}c@{)\hspace*{0.1cm}} 
}
\toprule
& \multicolumn{4}{c}{$\mathcal{V}_h^\Gamma \times \mathcal{Q}_h^\Gamma$} \\
\midrule
$L$ 
& $e_{(\mathbf{u},p)}$ & eoc & $e_{\mathbf{u},L^2}$ & eoc 
\\
\midrule
0 & \num{3.193850e-02} & --- & \num{3.570931e-04} & ---  \\
1 & \num{9.282097e-03} & 1.8 & \num{7.257297e-05} & 2.3 \\
2 & \num{2.227819e-03} & 2.0 & \num{1.016501e-05} & 2.8 \\
3 & \num{5.460477e-04} & 2.0 & \num{1.319041e-06} & 3.0 \\
4 & \num{1.357255e-04} & 2.0 & \num{1.680337e-07} & 3.0 \\
5 & \num{3.382225e-05} & 2.0 & \num{2.119221e-08} & 3.0 \\
6 & \num{8.441304e-06} & 2.0 & \num{2.660372e-09} & 3.0 \\
\bottomrule
\end{tabular}
\end{center}
\caption{Convergence history for discretizations with velocity enrichment and parametric mapping.}
\label{tab:results3}
\end{table}

In further numerical studies we observed that - although the ghost penalty stabilization is necessary to prove the inf-sup stability in section \ref{sec:ghostpen} - we obtain almost identical results if we do not apply the ghost penalty stabilization ($\gamma=0$).

\bibliographystyle{alpha}
\bibliography{library}

\end{document}